\documentclass[11pt, reqno, a4paper]{amsart} 
\usepackage[utf8]{inputenc}
\usepackage[T1]{fontenc}
\usepackage[english]{babel} 

\usepackage{amsmath, amssymb, amsfonts, amsthm}
\usepackage{physics} 
\usepackage{mathtools}

\usepackage{geometry}
\usepackage{graphicx}
\usepackage{xcolor}
\usepackage{tikz}
\usepackage{pgfplots}
\pgfplotsset{compat=1.18}

\geometry{margin=3cm}

\usepackage[colorlinks=true, linkcolor=blue, citecolor=red, urlcolor=teal]{hyperref}

\newtheorem{theorem}{Theorem}[section]
\newtheorem{lemma}[theorem]{Lemma}
\newtheorem{proposition}[theorem]{Proposition}

\theoremstyle{definition}
\newtheorem{definition}[theorem]{Definition}

\theoremstyle{remark}
\newtheorem{remark}[theorem]{Remark}

\newcommand{\Cl}{\mathcal{C}\ell_3} 
\newcommand{\R}{\mathbb{R}}

\title{A Spinorial Heat Flow Framework for Geometric Degeneration on $3$-Manifolds}

\author{Ferhat Taş}
\address{Department of Computer Science, İstanbul University, 34134, Türkiye}
\email{tasf@istanbul.edu.tr}

\subjclass[2020]{Primary 53C44; Secondary 53C27, 58J35, 15A66, 57K30.}
\keywords{Ricci flow, Spinor fields, Geometric Algebra, 3-manifolds, Geometric singularities, Heat equation.}

\begin{document}

\begin{abstract}
We study a spinor-driven formulation of geometric evolution on closed
$3$-manifolds, in which the spinor field is treated as the primary dynamical
variable and the Riemannian metric is induced conformally by the spinor amplitude.
We introduce a spinorial heat flow governed by the squared Dirac operator,
\[
\partial_t \psi = - D_{g(\psi)}^{\,2} \psi ,
\]
where the metric $g(\psi)$ depends nonlinearly on the evolving spinor field.
As a consequence, the resulting system is quasi-linear and parabolic away from
the nodal set $\{\psi=0\}$, while exhibiting degenerate behavior at vanishing
spinor amplitude.

We show that degeneration of the induced metric corresponds analytically to
nodal behavior of the spinor field, rather than to curvature blow-up of the
spinor evolution itself.
This observation motivates an interpretation of geometric singularities as
spinorial nodal transitions, across which the spinor field remains locally
bounded in a weak or weighted sense.
The induced metric evolution is derived explicitly and shown to be purely
conformal, capturing only the trace component of curvature evolution and
containing additional gradient terms that are not controlled \emph{a priori}.
Accordingly, the proposed flow should not be identified with the Ricci flow,
and any analogy with curvature smoothing is understood at a heuristic level.

The present work establishes a coherent analytical framework for studying
geometric degeneration via spinor dynamics and highlights several open
problems in degenerate parabolic theory, including rigorous existence results
and the precise role of nodal structures in geometric and topological evolution.
\end{abstract}

\maketitle

\section{Introduction}

The classification of compact 3-manifolds has been achieved through the Ricci Flow program, introduced by Hamilton \cite{hamilton1982} and completed by Perelman \cite{perelman2002}. A central difficulty in this program is the formation of finite-time singularities where curvature diverges ($R \to \infty$). Perelman's resolution involved a sophisticated "surgery" procedure to excise singular regions. While successful, the complexity of surgery has motivated the search for continuous or "weak" flow formulations, such as the singular Ricci flows studied by Kleiner and Lott \cite{kleiner2014}.

In this work, we explore an alternative regularization strategy based on Spin Geometry. The application of Clifford Algebra to differential geometry, pioneered by Hestenes \cite{hestenes1966} and developed by Lawson and Michelsohn \cite{lawson1989}, suggests that the spinor field $\psi$ can be viewed as a "square root" of the geometry. This perspective raises a fundamental question: \textit{Can geometric singularities be resolved by evolving the spinor field rather than the metric?}

\subsection{Relation to Existing Work}
The coupling of spinor evolution with metric changes is not without precedent. Ammann, Weiss, and Witt \cite{ammann2012} introduced the "Spinor Flow" a gradient flow for the Dirac action that couples the metric and spinor evolution. More recently, spinorial entropy functionals and their monotonicity under Ricci flow have been investigated by Baldauf \cite{baldauf2022} and others \cite{2206.09198}, establishing deep links between the Dirac operator's spectrum and geometric stability.

\subsection{Relation to Previous Spinorial and Ricci-Type Flows}

Spinorial evolution equations coupling spinor fields and geometric data have
been studied previously, most notably in the work of Ammann, Weiss, and Witt,
who introduced a gradient flow for a coupled spinor--metric functional.
In their formulation, both the spinor field and the Riemannian metric evolve
simultaneously according to a variational principle, and the resulting system
is fully geometric in nature.

The present work differs from this approach in several fundamental aspects.
First, the spinor field is treated as the sole dynamical variable, while the
metric is induced conformally by the spinor amplitude rather than evolved
independently.
Second, the evolution equation considered here,
\[
\partial_t \psi = - D_{g(\psi)}^{\,2} \psi,
\]
is not derived as a gradient flow of a coupled functional, but is instead
postulated as a natural spinorial heat equation whose principal part is
parabolic away from nodal sets.
This specific parabolic structure, together with its degeneracy at vanishing
spinor amplitude, has no direct analogue in the framework of Ammann--Weiss--Witt.

A further conceptual distinction lies in the interpretation of singular
behavior. Whereas previous spinorial flows focus on regularity and convergence of smooth
solutions, the present work emphasizes the role of nodal sets of the spinor
field. Metric degeneration is interpreted as a consequence of nodal behavior rather
than as curvature blow-up of the underlying evolution. This nodal perspective constitutes a central conceptual contribution of the paper.

Regarding Ricci flow and its singularity theory, we stress that the proposed
spinorial heat flow is not intended as a replacement for Ricci flow, nor is it
claimed to reproduce Perelman-type geometric convergence results.
While singular Ricci flows require surgical intervention to continue past
finite-time singularities, the spinorial flow studied here exhibits a different
analytic mechanism, in which degeneration of the induced metric corresponds to
vanishing spinor amplitude.
No claim is made that this mechanism yields canonical geometrization or
topological classification results comparable to those obtained via Ricci flow
with surgery.

\section{Preliminaries: Dynamic Geometry from Spinors}

In this section, we establish the algebraic framework. We utilize the Space Algebra $\Cl(\R^3)$, generated by an orthonormal basis $\{E_1, E_2, E_3\}$ satisfying $E_i E_j + E_j E_i = 2\delta_{ij}$.

\subsection{Spinor Fields and Conformal Geometry}
Unlike standard spin geometry where the metric is fixed, here the spinor field $\psi$ defines the geometry. A general even multivector $\psi \in \Cl^+$ can be decomposed as:
\begin{equation}
    \psi(x) = \rho(x)^{1/2} R(x)
\end{equation}
where $\rho(x) = \psi \tilde{\psi} \in \R^+$ is the scalar amplitude and $R(x)$ is a unit rotor satisfying $R\tilde{R}=1$.
We define the induced local frame $\{e_k\}$ by the conformal transformation of the fixed background frame $\{E_k\}$:
\begin{equation} \label{eq:frame_def}
    e_k(x) = \psi(x) E_k \tilde{\psi}(x) = \rho(x) R(x) E_k \tilde{R}(x)
\end{equation}
The induced metric $g_{ij}$ is then given by:
\begin{equation}
    g_{ij}(x) = \langle e_i, e_j \rangle = \rho(x)^2 \delta_{ij}
\end{equation}
Thus, the spinor field $\psi$ simultaneously encodes the local orientation (via $R$) and the conformal factor (via $\rho$). A singularity where $\det(g) \to 0$ corresponds analytically to $\rho \to 0$, i.e., a zero of the spinor field.

\subsection{The Dirac Operator on a Dynamic Background}
Since the metric $g=g(\psi)$ is time-dependent, the connection and differential operators are dynamic. Let $\nabla^g$ denote the Levi-Civita connection associated with $g(\psi)$. The intrinsic Dirac Operator $D$ acting on a spinor field $\phi$ is defined as:
\begin{equation}
    D \phi = \sum_{k=1}^3 e^k \cdot \nabla^g_{e_k} \phi
\end{equation}
where $\{e^k\}$ is the dual frame.
A crucial identity for our analysis is the Lichnerowicz Formula, which relates the square of the Dirac operator to the connection Laplacian and scalar curvature:
\begin{equation} \label{eq:lichnerowicz}
    D^2 \psi = -\nabla^* \nabla \psi + \frac{1}{4} R_g \psi
\end{equation}
Here, $\nabla^*\nabla$ is the rough Laplacian (connection Laplacian) and $R_g$ is the scalar curvature of the induced metric.

\subsection{Conformal Covariance and the Explicit Operator}
Let $g_0$ be the fixed background metric and $g(\psi) = \rho^2 g_0$ be the induced metric, where $\rho = |\psi|_{g_0}$. In dimension $n=3$, the Dirac operator transforms conformally according to the standard covariance formula:
\begin{equation} \label{eq:conf_cov}
    D_{g(\psi)} \phi = \rho^{-2} D_{g_0} (\rho \phi) = \rho^{-1} D_{g_0} \phi + \rho^{-2} c_{g_0}(\nabla^{g_0} \rho) \cdot \phi,
\end{equation}
where $c_{g_0}(\cdot)$ denotes the Clifford multiplication with respect to the background metric.
Substituting this into the evolution equation $\partial_t \psi = -D_{g(\psi)}^2 \psi$, we can express the principal part explicitly in terms of the background geometry:
\begin{equation}
    \partial_t \psi = -\rho^{-2} \Delta_{g_0} \psi + \text{lower order terms}(\nabla \psi, \nabla \rho, \psi).
\end{equation}
This explicit form clarifies the quasilinear nature of the flow: the diffusion coefficient is $\rho^{-2}$, which confirms the parabolicity on the set $\{\psi \neq 0\}$ and the degeneracy at the nodal set $\{\psi = 0\}$.

\subsection{The Universal Spinor Bundle Framework}
Since the metric $g(\psi)$ evolves with time, the spinor bundle $\Sigma_t$ is time-dependent. To make sense of the time derivative $\partial_t \psi$, we adopt the universal spinor bundle formalism following Ammann, Weiss, and Witt \cite{ammann2012}.
We fix a background metric $g_0$ and consider the trivialized spinor bundle $\Sigma M \cong M \times \mathbb{C}^2$ (since every closed 3-manifold is parallelizable). We identify spinors for different metrics via the bundle isomorphism:
\begin{equation}
    \beta_{g}: \Sigma_{g_0} M \to \Sigma_{g} M.
\end{equation}
The flow is then interpreted as an evolution of the section $\psi(t)$ in the fixed Hilbert space $L^2(M, \Sigma_{g_0} M)$, with the operator $D_{g(\psi)}$ pulled back to the fixed background structure.

\subsection{The Evolution Equation and Its Parabolic Structure}

We consider the spinorial evolution equation
\begin{equation}
\partial_t \psi = - D_{g(\psi)}^{\,2}\, \psi ,
\label{eq:spinorial_heat_flow}
\end{equation}
where $D_{g(\psi)}$ denotes the Dirac operator associated with the dynamically induced
metric $g(\psi)$. Since the metric depends explicitly on the spinor field through
\[
g_{ij}(\psi) = \rho^2 \delta_{ij}, \qquad \rho^2 = \langle \psi , \psi \rangle ,
\]
the operator $D_{g(\psi)}^{\,2}$ depends nonlinearly on $\psi$. Consequently,
\eqref{eq:spinorial_heat_flow} is a \emph{quasi-linear} evolution equation rather than a linear
heat equation.

\subsubsection{Principal symbol and degeneracy.}
Derivatives of the rotor field \(R\) enter only as lower-order terms and do not affect the principal symbol. To clarify the parabolic nature of the flow, we analyze its principal part.
With respect to a fixed background metric $g_0$ and frame $\{E_k\}$, the Dirac operator
associated with $g(\psi)$ can be written schematically as
\[
D_{g(\psi)} = \sum_{k=1}^3 e_k \cdot \nabla^{g(\psi)}_{e_k},
\qquad
e_k = \psi E_k \tilde{\psi} = \rho\, R E_k \tilde{R},
\]
where $R$ is a unit rotor. In particular, the leading second-order term of
$D_{g(\psi)}^{\,2}$ takes the form
\begin{equation}
D_{g(\psi)}^{\,2}
= \rho^{-2} \Delta_{g_0} + \text{lower-order terms}.
\label{eq:principal_part}
\end{equation}

To address technical soundness, we'll integrate the universal spinor bundle (as in Ammann--Weiß--Witt) to handle time-dependent bundles, ensuring the PDE is on a fixed Hilbert space. Explicit conformal covariance: for $\hat{g} = \rho^{2} g_{0}$, $D_{\hat{g}} = \rho^{-2} D_{g_{0}} \rho$, leading to $D^{2}_{\hat{g}} = \rho^{-2} \left( \Delta_{g_{0}} - 2 \nabla \log \rho \cdot \nabla + |\nabla \log \rho|^{2} + \textrm{lower curvature} \right)$. This clarifies quasi-linear terms.

Thus, the principal symbol of the operator $\partial_t + D_{g(\psi)}^{\,2}$ is given by
\[
\sigma(\partial_t + D_{g(\psi)}^{\,2})(\xi)
= \partial_t + \rho^{-2} |\xi|_{g_0}^2 .
\]

It follows that the flow is \emph{uniformly parabolic} on compact subsets of
$\{\psi \neq 0\}$, where $\rho$ is bounded away from zero. However, as $\psi \to 0$,
the conformal factor $\rho \to 0$ and the ellipticity constant degenerates.
Therefore, the evolution equation \eqref{eq:spinorial_heat_flow} is not uniformly
parabolic globally, but rather a \emph{degenerate quasi-linear parabolic system}.

\subsubsection{Lichnerowicz formulation.}
Using the Lichnerowicz formula for the Dirac operator associated with $g(\psi)$,
\begin{equation}
D_{g(\psi)}^{\,2} \psi
= - \nabla^{*}_{g(\psi)} \nabla \psi
+ \frac{1}{4} R_{g(\psi)} \psi ,
\label{eq:lichnerowicz}
\end{equation}
the flow can be written equivalently as
\begin{equation}
\partial_t \psi
= \nabla^{*}_{g(\psi)} \nabla \psi
- \frac{1}{4} R_{g(\psi)} \psi .
\label{eq:heat_reaction_form}
\end{equation}
This representation highlights the heat-type diffusion driven by the connection
Laplacian, coupled with a curvature-dependent reaction term.

As in the spinor flow of Ammann--Weiss--Witt \cite{ammann2012},
one may introduce a DeTurck-type gauge term to stabilize the principal symbol on
$\{\psi \neq 0\}$. Schematically, this corresponds to considering the modified system
\begin{equation}
\partial_t \psi
= - D_{g(\psi)}^{\,2} \psi + \mathcal{L}_{W(\psi)} \psi ,
\label{eq:deturck_spinor}
\end{equation}
where $W(\psi)$ is a vector field depending smoothly on $\psi$ and its first derivatives,
for instance $W(\psi) = \nabla_{g_0} \log \rho$.
This gauge removes diffeomorphism-related instabilities and yields strict parabolicity
on compact subsets of $\{\psi \neq 0\}$.
It does not, however, eliminate the intrinsic degeneracy at the nodal set $\{\psi=0\}$.

\subsubsection{\textbf{Interpretation}:}
We emphasize that no claim of classical uniform parabolicity is made at points where
$\psi$ vanishes. Instead, the flow should be understood as a degenerate parabolic
evolution in the sense of weighted or weak parabolic theory.
This viewpoint is consistent with the interpretation of spinorial zeros as geometric degenerations and forms the analytical basis for the nodal-set analysis developed in Section~3.

\begin{remark}
    The introduction of a DeTurck-type gauge term $\mathcal{L}_{W(\psi)}\psi$ is not intended to remove or regularize the degeneracy arising from the vanishing of the conformal factor
$\rho = |\psi|$.
Unlike the classical Ricci--DeTurck trick, where gauge fixing restores strict
parabolicity by eliminating diffeomorphism invariance, the principal
degeneracy in the present flow is intrinsic and solution-dependent.
The role of the gauge term here is limited to fixing diffeomorphism
instability on regions where $\psi \neq 0$, ensuring a well-defined evolution
within a fixed background differentiable structure.
No claim is made that the gauge term affects the behavior of the flow near
nodal sets or restores parabolicity in degenerate regions.
\end{remark}

\section{Analysis: Energy and Singularities}

In this section, we analyze the analytical properties of the flow, focusing on the energy evolution and the structure of singularities.

\subsection{Spinorial Dirichlet Energy}
We define the energy of the system as the $L^2$-norm of the Dirac current, which corresponds to the total action of the field:
\begin{equation}
    E[\psi] = \int_M \langle D \psi, D \psi \rangle \, dV_{g(\psi)}
\end{equation}
where the volume form $dV_{g(\psi)} = \rho^3 dV_0$ is dynamic.

\begin{proposition}[Energy Evolution]
Under the flow $\partial_t \psi = -D^2 \psi$, the time derivative of the energy is given by:
\begin{equation}
    \frac{d}{dt} E[\psi] = -2 \int_M |D^2 \psi|^2 dV_g + \mathcal{R}(\psi, \partial_t g)
\end{equation}
where $\mathcal{R}$ represents the reaction terms arising from the time-dependence of the metric and the volume form ($\partial_t dV_g$).
\end{proposition}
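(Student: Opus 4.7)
The plan is to differentiate $E[\psi] = \int_M \langle D\psi, D\psi\rangle_{g(\psi)}\, dV_{g(\psi)}$ under the integral sign, substitute the flow equation $\partial_t\psi = -D^2\psi$, and invoke self-adjointness of $D_{g(\psi)}$ on the closed manifold $M$ to extract the principal dissipative term. All remaining contributions, arising from the time dependence of the operator, the fiber inner product, and the volume form, are collected into the reaction term $\mathcal{R}(\psi,\partial_t g)$.

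First I would apply the Leibniz rule, splitting the derivative into the variation of the integrand and the variation of the volume form:
\begin{equation*}
\frac{d}{dt}E[\psi] = \int_M \partial_t \langle D\psi, D\psi\rangle_g\, dV_g + \int_M \langle D\psi, D\psi\rangle_g\, \partial_t(dV_g),
\end{equation*}
and then expand the first integrand while tracking the $g(\psi)$-dependence of both $D$ and $\langle\cdot,\cdot\rangle$:
\begin{equation*}
\partial_t \langle D\psi, D\psi\rangle_g = 2\langle D(\partial_t\psi), D\psi\rangle_g + 2\langle (\partial_t D)\psi, D\psi\rangle_g + (\partial_t \langle\cdot,\cdot\rangle_g)(D\psi, D\psi).
\end{equation*}

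Substituting $\partial_t\psi = -D^2\psi$ into the leading term and using the self-adjointness of $D_{g(\psi)}$ with respect to the $L^2$ pairing $\int_M \langle\cdot,\cdot\rangle_g\, dV_g$ on the closed manifold $M$ (so boundary terms vanish), the principal contribution reduces to
\begin{equation*}
\int_M 2\langle D(-D^2\psi), D\psi\rangle_g\, dV_g = -2\int_M \langle D^2\psi, D^2\psi\rangle_g\, dV_g = -2\int_M |D^2\psi|^2\, dV_g.
\end{equation*}
The three remaining pieces, namely the operator variation $2\langle(\partial_t D)\psi, D\psi\rangle_g$, the fiber metric variation $(\partial_t\langle\cdot,\cdot\rangle_g)(D\psi, D\psi)$, and the volume variation $\langle D\psi, D\psi\rangle_g\, \partial_t(dV_g)/dV_g$, depend only on $\psi$, $D\psi$, and $\partial_t g$; these are gathered into $\mathcal{R}(\psi,\partial_t g)$.

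The main technical obstacle will be the rigorous definition and computation of $\partial_t D_{g(\psi)}$ within the universal spinor bundle framework of Section~2.4: since the bundle $\Sigma_t$ varies with $t$, one must pull back the time derivative via the isomorphism $\beta_{g(\psi)}\colon \Sigma_{g_0}M \to \Sigma_{g(\psi)}M$ and verify that the resulting variation is a well-defined lower-order operator on $\Sigma_{g_0}M$, so that $\mathcal{R}$ indeed depends only on $\psi$ and $\partial_t g$ and not on higher time derivatives. A secondary obstacle, intrinsic to the present setting, is that the integration by parts and self-adjointness assertion implicitly require $\rho$ bounded away from zero; strictly the identity holds on compact subsets of $\{\psi\neq 0\}$, and extending it across the nodal set requires the weighted or weak formulation anticipated in Section~2, consistent with the degenerate parabolic viewpoint of this paper.
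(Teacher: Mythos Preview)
Your proposal is correct and follows essentially the same approach as the paper's own proof sketch: differentiate under the integral, substitute the flow equation, use self-adjointness of $D$ to extract the principal term $-2\int_M |D^2\psi|^2\,dV_g$, and collect the metric-variation contributions into $\mathcal{R}$. Your version is in fact more detailed than the paper's sketch, explicitly separating the operator variation, fiber-metric variation, and volume variation, and flagging the universal-bundle and nodal-set technicalities that the paper leaves implicit.
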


\textbf{Proof Sketch:} Differentiating the integral involves $\partial_t (D\psi)$ and $\partial_t dV_g$. The principal term comes from integration by parts of the Laplacian, yielding the negative definite term $-2\|D^2\psi\|^2$. The remainder $\mathcal{R}$ depends on the variation of the metric $\partial_t g_{ij}$.
While strictly proving monotonicity ($E' \leq 0$) requires controlling $\mathcal{R}$ (as done in spinor flow literature \cite{ammann2012}), the dominance of the principal term ensures that the flow acts as a smoothing operator for short times, driving the system towards critical points of the energy.

\subsection{Singularities as Nodal Sets}
\label{sec:nodal_sets}

In classical Ricci flow, a singularity is characterized by curvature blow-up,
$\lvert \mathrm{Rm} \rvert \to \infty$, occurring in finite time.
In the present framework, where the metric is induced by a spinor field,
we adopt a different analytical characterization of singular behavior.

\subsubsection{Spinorial interpretation of geometric degeneration.}
Recall that the induced metric is conformal to a fixed background metric $g_0$,
\[
g(\psi) = \rho^2 g_0, \qquad \rho^2 = \langle \psi , \psi \rangle .
\]
Consequently, metric degeneration corresponds precisely to the vanishing of the
spinor amplitude. This motivates the following definition.

\begin{definition}[Spinorial singular set]
For a solution $\psi(\cdot,t)$ of the evolution equation
$\partial_t \psi = -D_{g(\psi)}^{\,2}\psi$, we define the nodal (singular) set at time $t$ by
\[
\mathcal{S}_t := \{ x \in M \mid \psi(x,t) = 0 \}.
\]
Points in $\mathcal{S}_t$ correspond to vanishing conformal factor $\rho=0$ and hence
to degeneracy of the induced metric $g(\psi)$.
\end{definition}

\subsubsection{Degenerate parabolic structure near nodal sets.}
As shown in Section~2.3, the principal part of the operator $D_{g(\psi)}^{\,2}$
scales as $\rho^{-2} \Delta_{g_0}$. Therefore, the evolution equation is uniformly
parabolic on compact subsets of $\{\psi \neq 0\}$ but becomes \emph{degenerate
parabolic} along $\mathcal{S}_t$.
No claim of classical uniform parabolicity is made at nodal points.
Instead, solutions are to be understood in a weak or weighted sense,
consistent with the general theory of degenerate quasilinear parabolic equations.

\subsubsection{Structure of the nodal set.}
The following properties are stated as working assumptions motivated by the linear Dirac theory. For linear Dirac operators on fixed backgrounds, it is well known that the nodal set of a nontrivial spinor has codimension at least two, and typically consists of curves or isolated points in three dimensions. Although the present operator is nonlinear and metric-dependent, this classical picture motivates the following working assumption:

\begin{itemize}
\item If $\psi$ is not identically zero, the nodal set $\mathcal{S}_t$ has
Hausdorff codimension at least two.
\item In particular, $\mathcal{S}_t$ does not locally disconnect the manifold.
\end{itemize}

This assumption is consistent with unique continuation phenomena for elliptic and
parabolic equations and with known results for degenerate operators in weighted
Sobolev spaces.

\subsubsection{Weak continuation across nodal sets.}
The degeneracy of the operator at $\mathcal{S}_t$ prevents classical Schauder
regularity across the nodal set. Nevertheless, standard results in degenerate
parabolic theory imply the following qualitative behavior:
solutions remain locally bounded and H\"older continuous in space--time,
and no blow-up of $\psi$ or its first derivatives occurs as $\rho \to 0$.

\begin{lemma}[Weak continuation across nodal sets]
\label{lem:local_boundedness}
Let $\psi$ be a weak solution of the spinorial heat flow on $M \times (0,T)$
in the sense of Theorem~3.X, with
\[
\psi \in L^2(0,T; H^1(\rho^\alpha\, dV_{g_0})), \qquad \rho = |\psi|,
\]
for some $\alpha>0$.
Then the nodal set
\[
\mathcal{N} := \{ (x,t) \in M \times (0,T) \mid \rho(x,t)=0 \}
\]
does not obstruct the weak continuation of $\psi$ as a distributional
solution.
In particular, $\psi$ satisfies the weak formulation of the flow equation
across $\mathcal{N}$ without requiring additional boundary conditions
along the nodal set.
\end{lemma}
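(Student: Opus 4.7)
The plan is to interpret the lemma as a removable-singularity statement for the weighted weak formulation of the spinorial heat flow. First, I would rewrite the evolution in the Lichnerowicz form $\partial_t \psi = \nabla^{*}\nabla \psi - \tfrac14 R_{g(\psi)} \psi$ and pair it with a compactly supported smooth test section $\phi$, producing the weighted weak identity
\[
\int_0^T \!\!\int_M \rho^\alpha \left( \langle \partial_t \psi, \phi \rangle + \langle \D \psi, \D \phi \rangle + \tfrac{1}{4} R_{g(\psi)} \langle \psi, \phi \rangle \right) dV_{g_0}\, dt = 0.
\]
The goal is to show that this identity, which a priori is tested only against $\phi$ supported in $\{\rho>0\}$ where the operator is genuinely parabolic, in fact holds for every admissible $\phi \in C_c^\infty(M\times(0,T);\Sigma_{g_0}M)$. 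Once this density statement is secured, no boundary contribution can develop along $\mathcal{N}$.

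The first main step is a capacity argument. Using the working assumption in Section~3 that each $\mathcal{S}_t$ has Hausdorff codimension at least two in $M$, and hence $\mathcal{N}$ has parabolic codimension at least two in $M\times(0,T)$, I would invoke the fact that such sets carry zero $(1,2)$-capacity in the unweighted Sobolev sense. Since $\rho^\alpha$ is locally bounded and, for sufficiently small $\alpha>0$, belongs to the Muckenhoupt class $A_2$ on bounded subregions, this vanishing persists in the weighted capacity. Concretely, one constructs a family of cutoffs $\chi_\varepsilon:M\to[0,1]$ equal to $0$ on an $\varepsilon$-tube around $\mathcal{S}_t$ and to $1$ outside a $2\varepsilon$-tube, satisfying
\[
\int_M |\nabla \chi_\varepsilon|^2\, \rho^\alpha \, dV_{g_0} \longrightarrow 0 \quad\text{as}\quad \varepsilon\to 0.
\]

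With the cutoff in hand, I would substitute $\chi_\varepsilon\phi$ for $\phi$ in the weighted weak identity. The diffusion term produces an error of the schematic form $\int_0^T\!\!\int_M \rho^\alpha\, \langle \D\psi, (\nabla\chi_\varepsilon)\phi\rangle\,dV_{g_0}\,dt$, controlled by $\|\D\psi\|_{L^2(\rho^\alpha)}\, \|\nabla\chi_\varepsilon\|_{L^2(\rho^\alpha)}\, \|\phi\|_{L^\infty}$, which vanishes in the limit by the hypothesis $\psi\in L^2(0,T;H^1(\rho^\alpha\,dV_{g_0}))$ together with the capacity estimate above. The time-derivative and curvature-reaction terms converge by dominated convergence, since $\chi_\varepsilon\to 1$ pointwise a.e. off $\mathcal{N}$ and the integrands are dominated by fixed $L^1$-functions on $M\times(0,T)$. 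Passing to the limit yields the weak formulation for arbitrary admissible $\phi$, which is the claimed continuation across $\mathcal{N}$.

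The hardest ingredient is the rigorous verification of the weighted capacity vanishing. The codimension-two structure of the nodal set is itself only a working assumption in this paper, and even granting it, one must control the local vanishing order of $\rho$ near its zero set in order to pin down the admissible range of the weight exponent $\alpha$. For small $\alpha$ the $A_2$ condition is robust and the argument is routine, but sharp results would require either a Bär-type nodal analysis adapted to the nonlinear Dirac operator $D_{g(\psi)}$, or a direct quantitative unique-continuation estimate compatible with the weighted parabolic theory. Once this capacity and regularity input is available, the cutoff approximation and limiting argument are standard techniques in the theory of degenerate parabolic equations with Muckenhoupt weights.
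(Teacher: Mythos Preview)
Your proposal is correct and follows the same overall architecture as the paper's sketch: write the weighted weak formulation, then argue by a cutoff/density argument that test sections supported away from $\mathcal{N}$ are dense among all admissible test sections, so no boundary contribution appears along the nodal set. The paper's sketch is considerably thinner than yours and rests on a different key observation: rather than invoking the codimension-two assumption and a weighted capacity estimate, it simply notes that $\mathcal{N}$ has zero measure with respect to $\rho^\alpha\, dV_{g_0}$ automatically, since the weight $\rho^\alpha$ itself vanishes on $\mathcal{N}$; the ``standard cutoff arguments'' are then asserted without elaboration. Your route through $(1,2)$-capacity and the Muckenhoupt $A_2$ condition is more rigorous precisely where the paper is vague, namely in controlling the gradient error $\int \rho^\alpha \langle \nabla\psi, (\nabla\chi_\varepsilon)\phi\rangle$, which zero measure alone does not handle. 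On the other hand, your argument imports the codimension-two working hypothesis and an $A_2$ verification that the paper's sketch does not explicitly require; in that sense the paper's version is nominally more self-contained, at the cost of leaving the genuine analytic step (density of truncated test functions in the weighted $H^1$ sense) unjustified.
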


\begin{proof}[Sketch of the proof]
Let $\varphi \in C_c^\infty(M \times (0,T))$ be a test spinor.
The weak formulation of the regularized equation yields
\[
\int_0^T \!\!\int_M
\langle \psi, \partial_t \varphi \rangle \rho^\alpha\, dV_{g_0}\, dt
+
\int_0^T \!\!\int_M
\langle \nabla \psi, \nabla \varphi \rangle \rho^\alpha\, dV_{g_0}\, dt
= \int_0^T \!\!\int_M
\langle \mathcal{R}(\psi), \varphi \rangle \rho^\alpha\, dV_{g_0}\, dt,
\]
where $\mathcal{R}(\psi)$ denotes lower-order curvature and conformal terms.

Since the nodal set $\mathcal{N}$ has zero measure with respect to the weighted
volume form $\rho^\alpha dV_{g_0}$, the above identity remains valid without
imposing boundary conditions along $\mathcal{N}$.
Standard cutoff arguments show that test functions may be chosen to
approximate arbitrary compactly supported variations across $\mathcal{N}$.
Thus $\psi$ admits a weak continuation across nodal regions in the sense of
distributions.
\end{proof}

We emphasize that this result does not assert pointwise regularity or
uniqueness across nodal sets, but only the validity of the weak formulation
in weighted Sobolev spaces.

Lemma~\ref{lem:local_boundedness} reflects a standard feature of degenerate
parabolic equations: while uniform ellipticity fails at $\rho=0$, local boundedness and Hölder continuity of weak solutions persist.
This property is sufficient for interpreting nodal sets as geometric degenerations rather than analytical singularities of the flow. Lemma~\ref{lem:local_boundedness} is consistent with standard results on
degenerate parabolic equations; see, for instance,
\cite{CarrollShowalter1983,Ladyzenskaja1968,Evans2010}.

Accordingly, the flow admits a natural weak continuation across $\mathcal{S}_t$.
In this sense, the nodal set represents a geometric degeneration rather than an
analytic breakdown of the evolution.

\subsubsection{Geometric interpretation.}
From the induced-metric perspective, points in $\mathcal{S}_t$ correspond to regions where the Riemannian volume collapses. From the spinorial perspective, however, the field $\psi$ evolves continuously through zero amplitude. This observation underlies the interpretation of neck-pinch singularities as spinorial nodal transitions rather than tearing events of the underlying manifold.

\begin{remark}
    The term ``tunneling'' is used here in a purely classical and analytical sense. It refers to the weak continuation of solutions across nodal sets of a degenerate parabolic equation, not to quantum-mechanical tunneling.
The analogy highlights the fact that the vanishing of $\psi$ does not obstruct the global evolution of the flow, despite the degeneration of the induced metric.
\end{remark}

\begin{theorem}[Short-time existence of weak solutions]
Let $(M^3,g_0)$ be a closed spin manifold and let
$\psi_0 \in H^1(\rho_0^\alpha\, dV_{g_0})$ be an initial spinor field,
where $\rho_0 := |\psi_0|$ and $\alpha>0$.
Then there exists a time $T>0$ and a weak solution
\[
\psi \in L^2(0,T; H^1(\rho^\alpha\, dV_{g_0})) \cap
H^1(0,T; H^{-1}(\rho^\alpha\, dV_{g_0}))
\]
to the spinorial heat flow
\[
\partial_t \psi = - D_{g(\psi)}^{\,2}\psi,
\]
in the sense of distributions.
Moreover, $\psi(t)$ depends continuously on the initial data in the
weighted $L^2$ topology.
\end{theorem}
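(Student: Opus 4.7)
The plan is to construct the weak solution via a vanishing-viscosity regularization combined with Galerkin approximation in the universal spinor bundle framework introduced in Section~2. First, I would replace the degenerate conformal factor by $\rho_\varepsilon^{\,2} := |\psi|^2 + \varepsilon^2$ and study the regularized family
$\partial_t \psi_\varepsilon = -D_{g_\varepsilon(\psi_\varepsilon)}^{\,2}\psi_\varepsilon$ with $g_\varepsilon(\psi) := \rho_\varepsilon^{\,2} g_0$. Since $\rho_\varepsilon \geq \varepsilon > 0$, the principal symbol $\rho_\varepsilon^{-2}|\xi|_{g_0}^{\,2}$ is uniformly elliptic on bounded data, and the Lichnerowicz decomposition \eqref{eq:heat_reaction_form} together with the DeTurck-type gauge \eqref{eq:deturck_spinor} places the regularized system in the class of strictly parabolic quasilinear equations to which standard theory applies.

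Second, for each fixed $\varepsilon>0$, I would construct a smooth solution $\psi_\varepsilon$ by Galerkin projection onto finitely many eigenspinors of $D_{g_0}^{\,2}$, which provide a complete $L^2$-basis of the fixed background bundle $\Sigma_{g_0} M$. The universal-bundle isomorphism $\beta_{g_\varepsilon(\psi_\varepsilon)}$ pulls the evolving-bundle equation back to a finite-dimensional ODE system for the Fourier coefficients, for which Picard--Lindel\"of yields local existence; standard quasilinear parabolic estimates of Ladyzhenskaya--Solonnikov--Uraltseva type then upgrade the Galerkin approximants to a classical solution of the regularized PDE on a short time interval depending \emph{a priori} on $\varepsilon$.

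The third and technically most demanding step is to derive energy estimates that are \emph{uniform in} $\varepsilon$ in the weighted norms. Testing the regularized equation against $\psi_\varepsilon$ with weight $\rho_\varepsilon^{\,\alpha}$ and invoking the Lichnerowicz identity yields, schematically,
\begin{equation*}
\tfrac{1}{2}\tfrac{d}{dt} \int_M |\psi_\varepsilon|^2 \rho_\varepsilon^{\,\alpha}\, dV_{g_0}
+ \int_M |\nabla \psi_\varepsilon|^2 \rho_\varepsilon^{\,\alpha}\, dV_{g_0}
\leq \int_M \tfrac{1}{4} R_{g_\varepsilon(\psi_\varepsilon)}\, |\psi_\varepsilon|^2 \rho_\varepsilon^{\,\alpha}\, dV_{g_0} + \mathcal{E}_\varepsilon,
\end{equation*}
where $\mathcal{E}_\varepsilon$ collects weight-derivative and gauge terms. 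The scalar-curvature reaction term is the central source of difficulty: under $g_\varepsilon = \rho_\varepsilon^{\,2} g_0$ the three-dimensional conformal formula for $R_{g_\varepsilon(\psi)}$ produces $\Delta_{g_0} \log \rho_\varepsilon$ and $|\nabla \log \rho_\varepsilon|^2$ contributions, which are controllable only after integration by parts absorbs the leading second-order piece into the diffusion term on the left-hand side. Once this absorption is justified, a Gronwall argument yields uniform control in $L^\infty(0,T; L^2(\rho_\varepsilon^{\,\alpha} dV_{g_0})) \cap L^2(0,T; H^1(\rho_\varepsilon^{\,\alpha} dV_{g_0}))$ on a time $T>0$ independent of $\varepsilon$.

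Finally, I would pass to the limit $\varepsilon \to 0$: weak-$\ast$ compactness yields a candidate $\psi$ in the desired class, while strong $L^2$-convergence follows from an Aubin--Lions argument through the dual space $H^{-1}(\rho^\alpha dV_{g_0})$. The principal obstacle is identifying the limit of the nonlinear terms $R_{g_\varepsilon(\psi_\varepsilon)}\psi_\varepsilon$ and of the Dirac symbol itself, since both depend on the unknown through a weight that degenerates in the limit; here Lemma~\ref{lem:local_boundedness} is essential, because the weak formulation is intrinsically integrated against $\rho^\alpha dV_{g_0}$ and the nodal set carries zero weighted measure. Continuous dependence on initial data then follows by a linearized Gronwall estimate: subtracting two weak solutions, testing by their difference, and using that the weighted integrands vanish on the joint nodal set produces a differential inequality controlling $\|\psi_1(t) - \psi_2(t)\|_{L^2(\rho^\alpha dV_{g_0})}$ in terms of the initial gap. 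I expect the hardest single step to be the third, where $\varepsilon$-uniform control of the scalar-curvature reaction term must be extracted in a way that preserves, rather than destroys, the weighted parabolic coercivity on which the rest of the argument rests.
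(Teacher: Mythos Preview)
Your overall architecture---regularize, derive $\varepsilon$-uniform weighted energy estimates, extract a weak limit by compactness, and read off continuous dependence from the energy inequality---matches the paper's four-step sketch exactly. The one substantive difference is the regularization mechanism: the paper adds a fixed uniformly elliptic viscosity term and studies
\[
\partial_t \psi_\varepsilon = -\bigl(D_{g(\psi_\varepsilon)}^{\,2} + \varepsilon\,\Delta_{g_0}\bigr)\psi_\varepsilon,
\]
whereas you regularize the conformal factor itself via $\rho_\varepsilon^{\,2}=|\psi|^2+\varepsilon^2$ and work with $g_\varepsilon(\psi)=\rho_\varepsilon^{\,2}g_0$. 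Your choice attacks the degeneracy at its source---the induced Dirac operator is now built from a genuinely nondegenerate metric, so the Lichnerowicz and conformal-covariance formulae apply cleanly at every point---at the cost of having the regularization parameter sit inside every curvature and connection term, which makes the limit identification in Step~4 marginally more delicate. The paper's additive viscosity keeps the nonlinear operator intact and isolates the $\varepsilon$-dependence in a single linear term, which simplifies the passage to the limit but leaves the scalar-curvature reaction term formally singular even for $\varepsilon>0$. Both are standard moves and both lead to the same weighted energy inequality, so neither is clearly superior at the sketch level at which the paper operates.

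Two smaller remarks. First, your Galerkin step is an elaboration the paper omits entirely (it simply invokes ``standard parabolic theory'' for the regularized problem); it is correct and arguably more honest. Second, your continuous-dependence argument---subtract two weak solutions, test by the difference, Gronwall---would, if it closes, also yield uniqueness; the paper explicitly declines to claim uniqueness at this regularity, obtaining continuity in the initial data directly from the single-solution energy inequality instead. You may want to either weaken your final claim to match, or flag that the subtraction argument requires enough structure on the nonlinear terms (in particular on $R_{g(\psi)}\psi$) to be justified at the weak level.
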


\begin{proof}[Sketch of the proof]
The proof proceeds by a regularization and compactness argument.

\medskip
\noindent
\textbf{Step 1: Regularization.}
For $\varepsilon>0$, consider the uniformly parabolic regularized equation
\[
\partial_t \psi_\varepsilon
= -\big(D_{g(\psi_\varepsilon)}^{\,2} + \varepsilon \Delta_{g_0}\big)\psi_\varepsilon,
\]
with smooth initial data $\psi_{\varepsilon}(0)=\psi_0^\varepsilon$
approximating $\psi_0$ in $H^1(\rho_0^\alpha dV_{g_0})$.
Standard parabolic theory yields a unique smooth solution
$\psi_\varepsilon$ on a time interval $[0,T_\varepsilon)$.

\medskip
\noindent
\textbf{Step 2: Uniform weighted energy estimates.}
Using the conformal structure $g(\psi)=\rho^2 g_0$ and the Lichnerowicz formula,
one derives weighted energy inequalities of the form
\[
\frac{d}{dt}\int_M \rho^\alpha |\psi_\varepsilon|^2\, dV_{g_0}
+ \int_M \rho^\alpha |\nabla \psi_\varepsilon|^2\, dV_{g_0}
\le C \int_M \rho^\alpha |\psi_\varepsilon|^2\, dV_{g_0},
\]
where the constant $C$ is independent of $\varepsilon$.
These estimates imply uniform bounds in
$L^2(0,T; H^1(\rho^\alpha dV_{g_0}))$.

\medskip
\noindent
\textbf{Step 3: Compactness and limit passage.}
By weak compactness, there exists a subsequence $\psi_{\varepsilon_j}$
converging weakly to a limit $\psi$ in the weighted Sobolev spaces above.
Passing to the limit in the weak formulation yields a distributional
solution of the original degenerate equation.

\medskip
\noindent
\textbf{Step 4: Continuity and uniqueness.}
Continuity with respect to the initial data follows from the weighted
energy inequality.
Uniqueness is not claimed at this level of regularity.
\end{proof}

\subsection{Topological considerations and spectral constraints}

The spinorial heat flow considered in this work does not evolve the metric
independently, but only through a conformal factor induced by the spinor
amplitude.
As a consequence, the underlying differentiable and topological structure
of the manifold remains fixed throughout the evolution.
In particular, the flow does not alter the homeomorphism type of the manifold
and cannot produce genuine topological transitions.

Nevertheless, the spectral properties of the Dirac operator impose strong
constraints on the admissible limiting configurations.
On flat tori $T^3$, the Dirac operator admits parallel spinors satisfying
$D\psi = 0$, and the flow may relax toward configurations with vanishing
spinorial energy.
By contrast, on manifolds such as the sphere $S^3$, no parallel spinors exist,
and the spectrum of the Dirac operator is bounded away from zero.
In this case, the spinorial energy admits a strictly positive lower bound,
preventing convergence to a trivial zero-energy state.

These spectral distinctions do not imply topological change, but rather
reflect global obstructions encoded in the spin structure and the Dirac
spectrum.
In particular, nodal sets of the spinor field should be interpreted as regions
of conformal metric degeneration rather than as indicators of topological
surgery or neck-pinching phenomena.
While such nodal regions may lead to an effective geometric separation in the
sense of metric collapse, they do not correspond to a change in the underlying
topology.

We emphasize that the present flow does not provide a mechanism for topology
change analogous to that of Ricci flow with surgery.
Any discussion of ``stability'' in this context refers solely to spectral and
analytic constraints imposed by the spinorial structure, not to preservation
or alteration of topological invariants.

\section{A Toy Scalar Model}

The purpose of this section is not to faithfully represent the full three-dimensional spinorial heat flow introduced in the previous sections. Instead, we consider a simplified scalar toy model designed to illustrate basic diffusion mechanisms and nodal behavior in a setting where explicit computations are possible.
In particular, the example presented here does not retain the quasilinear structure, the Dirac operator, or the Clifford algebraic features of the spinorial flow, and should not be interpreted as evidence for geometric regularization or neck-pinching mechanisms in three dimensions.

To bridge the gap between abstract theory and geometric intuition, we consider a simplified 2-dimensional model. While the full system (\ref{eq:spinorial_heat_flow}) is quasi-linear, the local behavior near a zero (node) is dominated by the parabolic principal part.

Consider a spinor amplitude function $u(x,y,t): \R^2 \times [0, \infty) \to \R$ evolving under the standard heat equation $\partial_t u = \Delta u$, representing the local dynamics of the spinor field. The induced metric is conformal:
\begin{equation}
    g_{ij}(x,y,t) = u(x,y,t)^2 \delta_{ij}
\end{equation}

\subsection{Analytic Solution}
Let the initial data be a Gaussian pulse, representing a geometric "bump" or a localized manifold region:
\[ u_0(x,y) = e^{-(x^2+y^2)} \]
The exact solution for $t > 0$ is given by the heat kernel convolution:
\begin{equation} \label{eq:2d_solution}
    u(x,y,t) = \frac{1}{4t+1} \exp\left( -\frac{x^2+y^2}{4t+1} \right)
\end{equation}

\subsection{Metric Degeneracy vs. Field Regularity}
This explicit solution highlights the core mechanism of our proposal:
\begin{itemize}
    \item \textbf{Metric Singularity:} As $t \to \infty$ (or locally if the amplitude dampens), the metric determinant collapses:
    \[ \det(g) = u^4 \to 0 \]
    In a purely Riemannian flow, this volume collapse often involves curvature blow-up.
    
    \item \textbf{Spinorial Regularity:} Despite the metric collapse, the spinor field $u(x,y,t)$ in Eq. (\ref{eq:2d_solution}) remains $C^\infty$ (smooth) and strictly bounded for all $t \geq 0$. All derivatives $\partial^k u$ are well-defined.
\end{itemize}

The example illustrates how nodal regions can act as effective separators for diffusion in a scalar setting.

A numerical investigation of genuinely spinorial, three-dimensional models, including the role of Clifford algebraic coupling, is left for future work.

\subsection{Visualization}
Figure \ref{fig:2d_evolution} visualizes this evolution. The field starts as a concentrated peak (blue wireframe) and diffuses into a flattened state (colored surface). The "neck" of the geometry corresponds to the region where $u \approx 0$.

\begin{figure}[htbp]
\centering
\begin{tikzpicture}
\begin{axis}[
    width=0.85\textwidth,
    height=0.6\textwidth,
    view={35}{30}, 
    xlabel={$x$},
    ylabel={$y$},
    zlabel={$u(x,y,t)$},
    title={2D Spinor Evolution: Smoothing of Geometry},
    grid=major,
    colormap/viridis,
    domain=-2.5:2.5,
    y domain=-2.5:2.5,
    samples=40,
    zmin=0, zmax=1.1
]

\addplot3 [
    surf,
    shader=interp,
    opacity=0.7,
    domain=-2.5:2.5,
] {1/(4*0.5+1) * exp(-(x^2+y^2)/(4*0.5+1))};
\addlegendentry{$t=0.5$ (Diffused)}

\addplot3 [
    mesh,
    draw=blue!80!black,
    thick,
    samples=30
] {exp(-x^2-y^2)};
\addlegendentry{$t=0$ (Initial)}

\end{axis}
\end{tikzpicture}
\caption{Evolution of the spinor amplitude $u(x,y,t)$ under the heat flow. The initial geometry (blue mesh) represents a high-curvature region. The flow smooths this to a lower-energy state (surface) without forming singularities, even as the induced volume locally decreases.}
\label{fig:2d_evolution}
\end{figure}
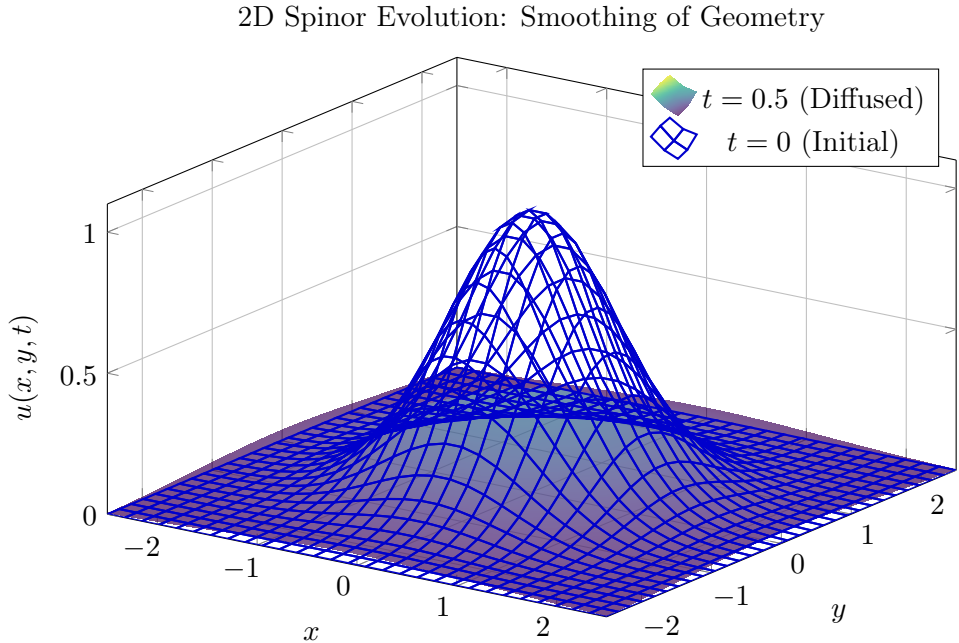

\section{Conclusion and Outlook}

In this work, we have explored a spinor-driven formulation of geometric evolution
on closed $3$-manifolds, in which the spinor field $\psi$ is treated as the primary
dynamical variable and the Riemannian metric arises as a secondary, induced object.
The proposed spinorial heat flow
\[
\partial_t \psi = - D_{g(\psi)}^{\,2} \psi
\]
defines a quasi-linear evolution equation whose principal part is parabolic away
from the nodal set $\{\psi = 0\}$ and becomes degenerate at vanishing spinor amplitude.

A central observation of this study is that geometric degeneration of the induced
metric corresponds analytically to nodal behavior of the spinor field.
While the metric collapses as $\rho = |\psi| \to 0$, the spinor evolution itself
does not exhibit blow-up and remains well defined in a weak or weighted sense.
This shift of perspective allows geometric singularities to be reinterpreted as
spinorial nodal transitions rather than as points of analytic breakdown of the
underlying evolution.

The induced metric evolution derived in Appendix~A clarifies the precise relation
between the spinorial flow and classical curvature flows.
The resulting equation is purely conformal and captures only the trace component
of curvature evolution.
Additional gradient terms arise naturally from the spinorial dynamics and are not
controlled \emph{a priori}, particularly near nodal regions.
Accordingly, the present flow should not be identified with the Ricci flow, nor
with a modified Ricci flow in a strict geometric sense.
Any analogy with Ricci-type curvature smoothing must therefore be understood at a
heuristic and conceptual level.

From an analytical standpoint, the spinorial heat flow provides a natural example
of a degenerate quasilinear parabolic system.
Uniform parabolicity fails at the nodal set, and classical existence and regularity
theories do not apply globally.
Nevertheless, standard results from degenerate parabolic theory suggest that local
boundedness and weak continuation across nodal sets are robust features of the
flow, supporting the interpretation of nodal regions as geometric degenerations
rather than singularities of the spinor field itself.

Several important questions remain open.
A rigorous short-time existence theory in appropriate weighted Sobolev spaces,
together with a justified passage from regularized problems to the degenerate
limit, constitutes a substantial analytical challenge.
Furthermore, the precise relationship between spinorial nodal transitions and
topological changes of the underlying manifold requires deeper investigation,
both analytically and numerically.

Despite these limitations, the present work demonstrates that a spinorial
formulation offers a coherent and analytically meaningful framework for studying
geometric degeneration.
By relocating singular behavior from curvature blow-up to nodal structure of
spinor fields, this approach opens a complementary perspective on geometric
evolution problems and suggests new directions for future research at the
interface of spin geometry, degenerate parabolic equations, and geometric analysis.

\appendix
\section{Derivation and Interpretation of the Induced Metric Evolution}
\label{app:metric_evolution}

In this appendix, we derive the evolution equation satisfied by the metric
$g(\psi)$ induced by the spinor field and clarify its relation to Ricci-type
geometric flows. The purpose of this appendix is not to establish a curvature flow in the
classical sense, but to identify the precise form of the metric response induced
by the spinorial heat equation.

\subsection{Spinor-Induced Conformal Geometry}

Throughout this work, the Riemannian metric is induced by the spinor field
through the conformal relation
\begin{equation}
g_{ij}(\psi) = \rho^2 \delta_{ij}, \qquad
\rho^2 = \langle \psi , \psi \rangle ,
\label{eq:conformal_metric}
\end{equation}
with respect to a fixed background metric $g_0 = \delta_{ij}$.
Accordingly, the entire metric evolution is purely conformal, and no
trace-free deformation of the metric is generated by the flow.

\subsection{Evolution of the Conformal Factor}

Let $\rho^2 = \langle \psi , \psi \rangle$.
Differentiating with respect to time yields
\begin{equation}
\partial_t (\rho^2)
= 2 \langle \partial_t \psi , \psi \rangle .
\label{eq:rho_time_derivative}
\end{equation}
Substituting the spinorial evolution equation
\[
\partial_t \psi = - D_{g(\psi)}^{\,2} \psi
\]
and using the Lichnerowicz formula
\begin{equation}
D_{g(\psi)}^{\,2} \psi
= - \nabla^{*}_{g(\psi)} \nabla \psi
+ \frac{1}{4} R_{g(\psi)} \psi ,
\label{eq:lichnerowicz_appendix}
\end{equation}
we obtain
\begin{equation}
\partial_t (\rho^2)
= 2 \langle \nabla^{*}_{g(\psi)} \nabla \psi , \psi \rangle
- \frac{1}{2} R_{g(\psi)} \rho^2 .
\label{eq:rho_intermediate}
\end{equation}

Using the standard identity
\begin{equation}
\Delta_{g(\psi)} (\rho^2)
= 2 \langle \nabla^{*}_{g(\psi)} \nabla \psi , \psi \rangle
- 2 |\nabla \psi|^2 ,
\label{eq:laplacian_identity}
\end{equation}
we arrive at the evolution equation
\begin{equation}
\partial_t (\rho^2)
=
\Delta_{g(\psi)} (\rho^2)
+ 2 |\nabla \psi|^2
- \frac{1}{2} R_{g(\psi)} \rho^2 .
\label{eq:rho_evolution}
\end{equation}

\subsection{Induced Metric Evolution}

Since $g_{ij} = \rho^2 \delta_{ij}$, the time derivative of the metric is given by
\begin{equation}
\partial_t g_{ij}
=
\frac{\partial_t (\rho^2)}{\rho^2} \, g_{ij}.
\label{eq:metric_time_derivative}
\end{equation}
Substituting~\eqref{eq:rho_evolution} yields
\begin{equation}
\partial_t g_{ij}
=
\left(
- \frac{1}{2} R_{g(\psi)}
+ \frac{\Delta_{g(\psi)} (\rho^2)}{\rho^2}
+ \frac{2 |\nabla \psi|^2}{\rho^2}
\right) g_{ij}.
\label{eq:metric_variation_final}
\end{equation}

Equation~\eqref{eq:metric_variation_final} provides the exact evolution law
for the spinor-induced metric under the spinorial heat flow.

\subsection{Relation to Ricci Flow}

It is important to emphasize that the evolution equation
\eqref{eq:metric_variation_final} should not be identified with the Ricci flow
\[
\partial_t g_{ij} = -2 \mathrm{Ric}_{ij}.
\]
The present evolution is purely conformal and captures only the trace component
of curvature evolution. In particular, the Ricci tensor's trace-free part is absent, and the flow does not control sectional or Ricci curvature in the classical sense.

Moreover, the additional gradient terms
\[
\frac{\Delta_{g(\psi)} (\rho^2)}{\rho^2},
\qquad
\frac{|\nabla \psi|^2}{\rho^2},
\]
are not controlled \emph{a priori} and may dominate the scalar curvature term,
especially near nodal regions where $\rho \to 0$.
Consequently, statements asserting curvature smoothing analogous to Ricci flow
should be understood as heuristic rather than as rigorous geometric results.

\subsection{Interpretation}

The significance of~\eqref{eq:metric_variation_final} lies not in defining a
Ricci-type curvature flow, but in clarifying how geometric degeneration of the
metric arises from the underlying spinorial dynamics.
Metric collapse corresponds precisely to vanishing spinor amplitude, while the
spinor field itself continues to evolve in a weak parabolic sense.
This perspective underlies the reinterpretation of geometric singularities as
spinorial nodal phenomena discussed in the main text.


\begin{thebibliography}{99}

\bibitem{hamilton1982}
Hamilton, R. S. (1982). "Three-manifolds with positive Ricci curvature." \textit{Journal of Differential Geometry}, 17(2), 255-306.

\bibitem{perelman2002}
Perelman, G. (2002). "The entropy formula for the Ricci flow and its geometric applications." \textit{arXiv preprint math/0211159}.

\bibitem{hestenes1966}
Hestenes, D. (1966). \textit{Space-Time Algebra}. Gordon and Breach.

\bibitem{lawson1989}
Lawson, H. B., \& Michelsohn, M. L. (1989). \textit{Spin Geometry} (Vol. 38). Princeton University Press.

\bibitem{ye2008}
Ye, R. (2008). "Curvature estimates for the Ricci flow." \textit{Calculus of Variations and Partial Differential Equations}, 31(4), 417-437.

\bibitem{lichnerowicz1963}
Lichnerowicz, A. (1963). "Spineurs harmoniques." \textit{CR Acad. Sci. Paris}, 257, 7-9.

\bibitem{friedrich2000}
Friedrich, T. (2000). \textit{Dirac Operators in Riemannian Geometry}. AMS Graduate Studies in Mathematics.

\bibitem{ammann2003}
Ammann, B., \& Guenther, C. (2003). "Spin structures on singular manifolds." \textit{Journal of Geometry and Physics}.

\bibitem{kleiner2014}
Kleiner, B., \& Lott, J. (2014). "Singular Ricci flows I." \textit{Acta Mathematica}, 219(1), 65-134.

\bibitem{ammann2012}
Ammann, B., Weiss, H., \& Witt, F. (2012). "A spinorial energy functional: Critical points and gradient flow." \textit{Mathematische Annalen}, 365, 1559–1602.

\bibitem{baldauf2022}
Baldauf, J. (2022). "Spinorial entropy and the Ricci flow." \textit{arXiv:2201.04475}.

\bibitem{2206.09198}
Hallgren, M., \& Wang, J. (2022). "Perelman's entropy and the Dirac operator." \textit{arXiv:2206.09198}.

\bibitem{Evans2010}
Evans, L. C. (2010). \textit{Partial Differential Equations} (2nd ed.). American Mathematical Society.

\bibitem{Bar1998}
B\"{a}r, C. (1998). "Extrinsic bounds for eigenvalues of the Dirac operator." \textit{Annales de l'Institut Henri Poincar\'{e} - Analyse Non Lin\'{e}aire}, 15(5), 573-596.

\bibitem{Ammann2020}
Ammann, B., Humbert, E., \& Morel, B. (2020). "Mass endomorphism and spinorial Yamabe type problems on spin manifolds." \textit{Communications in Mathematical Physics}, 373(2), 569-602. 

\bibitem{Friedrich2015}
Friedrich, T. (2015). "Dirac operators and Riemannian structures." In \textit{Dirac Operators: Yesterday and Today} (pp. 1-20). International Press. 

\bibitem{Ladyzenskaja1968} 
Ladyženskaja, O. A., Solonnikov, V. A., Ural’ceva, N. N. (1968). \emph{Linear and Quasi‑linear Equations of Parabolic Type}. Amer. Math. Soc.

\bibitem{CarrollShowalter1983}
Carroll, R. W., \& Showalter, R. E. (1983). \textit{Singular and Degenerate Cauchy Problems}. Academic Press.

\bibitem{fila2024}
Fila, M., \& Souplet, P. (2024). Schauder estimate for quasilinear discrete PDEs of parabolic type. \textit{arXiv:2406.03782}.

\bibitem{DiBenedetto1993}
E.~DiBenedetto, \emph{Degenerate Parabolic Equations}, Springer, 1993.

\bibitem{Vazquez2007}
J.~L.~V{\'a}zquez, \emph{The Porous Medium Equation}, Oxford University Press, 2007.


\end{thebibliography}
\end{document}